\newtheorem{theorem}{Theorem}
\newtheorem{corollary}[theorem]{Corollary}
\newtheorem{definition}[theorem]{Definition}
\newtheorem{lemma}[theorem]{Lemma}
\newtheorem{remark}[theorem]{Remark}
\newlength{\dhatheight}
     \def\EE{\mathbb{E}}
     \def\PP{\mathbb{P}}
     \def\RR{\mathbb{R}}
\def\calN{{\cal  N}}
\newdimen\biblioindent    \biblioindent=30pt
\DeclarePairedDelimiter\floor{\lfloor}{\rfloor}
\numberwithin{equation}{section}
\begin{document}

\title{Non-asymptotic bounds for percentiles of independent non-identical random variables}


\author{Dong Xia\\ Hong Kong University of Science and Technology}




\date{(\today)}

\maketitle

\begin{abstract}
This  note displays an interesting phenomenon for the percentiles of independent but non-identical random variables. 
Let $X_1,\cdots,X_n$ be independent random variables obeying non-identical continuous distributions and $X^{(1)}\geq \cdots\geq X^{(n)}$ be the corresponding order statistics. For $p\in(0,1)$, we investigate the $100(1-p)$\%-th percentile $X^{(\floor{pn})}$ and prove the non-asymptotic bounds for $X^{(\floor{pn})}$. In particular, for a wide class of distributions, we discover an intriguing connection between their median and the harmonic mean of the associated standard deviations. 
For example, if $X_k\sim\calN(0,\sigma_k^2)$ for $k=1,\cdots,n$ and $p=\frac{1}{2}$, we show that its median $\big|{\rm Med}\big(X_1,\cdots,X_n\big)\big|= O_P\Big(n^{1/2}\cdot\big(\sum_{k=1}^n\sigma_k^{-1}\big)^{-1}\Big)$ as long as $\{\sigma_k\}_{k=1}^n$ satisfy certain mild non-dispersion property. 
\end{abstract}

\section{Introduction}\label{sec:intro}
Order statistics, rank statistics and sample percentiles like median are prevalent in robust statistics and non-parametric statistical inference. See \cite{boucheron2012concentration}, \cite{latala2010order}, \cite{david2004order}, \cite{arnold1992first}, \cite{reiss2012approximate}, \cite{gibbons1993nonparametric} and references therein. Historically, the order statistics and sample percentiles have been thoroughly investigated for statistical samples, i.e., independent data sampled from identical distributions. In particular, suppose that $X_1,\cdots,X_n$ are i.i.d. random variables with the common cumulative distribution function $F(x)$. For any $p\in(0,1)$, we denote by $\theta_p$ the $100(1-p)$\%-th quantile of $F(x)$, i.e., $\theta_p=\inf\{x: F(x)\geq 1-p\}$. Similarly, we denote by $\hat\theta_p$ the sample $100(1-p)$\%-th percentile of $\{X_k\}_{k=1}^n$, namely, 
$$
\hat\theta_p:=\inf\bigg\{x: \frac{\#\{X_k\leq x: 1\leq k\leq n\}}{n}\geq 1-p \bigg\}.
$$
By Bahadur's representation (\cite[Theorem~5.11]{Shao_2003_book}, \cite{kiefer1967bahadur}, \cite{ghosh1971new}, \cite{bahadur1966note}), it is well-known that
\begin{equation}\label{eq:hat_thetap}
\big|\hat\theta_p-\theta_p \big|=O_P\bigg(\frac{\sqrt{p(1-p)}}{n^{1/2} F'(\theta_p)}\bigg)
\end{equation}
as long as $F'(\theta_p)>0$ exists. If $F(x)$ is symmetric on $\RR$ such that $F(0)=\frac{1}{2}$, then 
$$
\big|{\rm Med}(X_1,\cdots,X_n)\big|=O_P\bigg(\frac{1}{2n^{1/2}F'(0)}\bigg).
$$

Heterogenous noise arises naturally in diverse fields and statistical applications such as sparse model selection (\cite{cavalier2014sparse}) and inverse problems (\cite{johnstone1997wavelet}, \cite{cavalier2002oracle}, \cite{donoho1995nonlinear}). 
 However, compared with the i.i.d. scenario, the sample percentiles of a large and heterogeneous dataset is much less studied. We observe some intriguing phenomenon  for the percentiles of independent but non-identical random variables. 
 
 We begin with an illustrating example. Let $X_1,\cdots,X_{n_1}\sim\calN(0,\sigma_1^2)$ be i.i.d. normal random variables. By (\ref{eq:hat_thetap}) (see also \cite[Theorem~5.11]{Shao_2003_book}), we obtain 
 $$
 \EE \big|{\rm Med}\big(X_1,\cdots,X_{n_1}\big) \big|\approx \frac{\sigma_1}{\sqrt{n_1}}
 $$
 Suppose that another independent dataset $Z_1,\cdots, Z_{n_2}\sim\calN(0,\sigma_2^2)$ with $\sigma_1\gg \sigma_2$ is available,  we are interested in the magnitude of the median of the combined dataset. Obviously, if $n_2=1$, we expect the following inequality
 $$
 \EE \big|{\rm Med}\big(\{X_k\}_{k=1}^{n_1}, Z_1\big) \big|\leq  \EE \big|{\rm Med}\big(\{X_k\}_{k=1}^{n_1}\big) \big|
 $$
 to hold and $\EE \big|{\rm Med}\big(\{X_k\}_{k=1}^{n_1}, Z_1\big) \big|$ should still have size in the order of $\frac{\sigma_1}{\sqrt{n_1}}$ rather than $\sigma_2$ (standard deviation of $Z_1$).  The following question is of our interest. 
 
 {\it Question: for fixed $\sigma_1, n_1$ and $\sigma_2\ll \sigma_1$, how large should $n_2$ be such that we can expect $\EE \big|{\rm Med}\big(\{X_k\}_{k=1}^{n_1}, \{Z_k\}_{k=1}^{n_2}\big) \big|\approx \sigma_2$ ?}

Before presenting the formal answer, we display the simulation result in Figure~\ref{fig:med_normal}. It shows the simulated  $\EE \big|{\rm Med}\big(\{X_k\}_{k=1}^{n_1}, \{Z_k\}_{k=1}^{n_2}\big) \big|$ with respect to $n_2$ for $n_1=80$ and $\sigma_1=1000, \sigma_2=1$ , based on $5000$ repetitions for each $n_2$. Interestingly, we observe that  $\EE \big|{\rm Med}\big(\{X_k\}_{k=1}^{n_1}, \{Z_k\}_{k=1}^{n_2}\big) \big|\approx 1$ when $n_2\approx 18$ which is much smaller than $n_1$. Actually in Section~\ref{sec:normal}, we will prove that 
$$
\big|{\rm Med}\big(X_1,\cdots,X_n\big) \big|=O_P\bigg(n^{1/2}\cdot\Big(\sum_{k=1}^n \sigma_k^{-1}\Big)^{-1}\bigg)
$$
, if $X_k\sim\calN(0,\sigma_k^2)$ are independent. In the aforementioned example, if $\sigma_1\gg \sigma_2$ such that $\frac{n_1}{\sigma_1}\lesssim \frac{n_2}{\sigma_2}$, then we get
$$
\EE\big|{\rm Med}\big(\{X_k\}_{k=1}^{n_1}, \{Z_k\}_{k=1}^{n_2}\big) \big|=O_P\Big(\sigma_2\cdot\frac{n^{1/2}}{n_2}\Big)
$$
which is of the order $\sigma_2$ as long as $n_2\gtrsim \sqrt{n_1}$.  Put it differently, the median of a completely corrupted dataset can be strikingly ameliorated with only a small number of regular data. 
\begin{figure}
\centering
\includegraphics[scale=0.5]{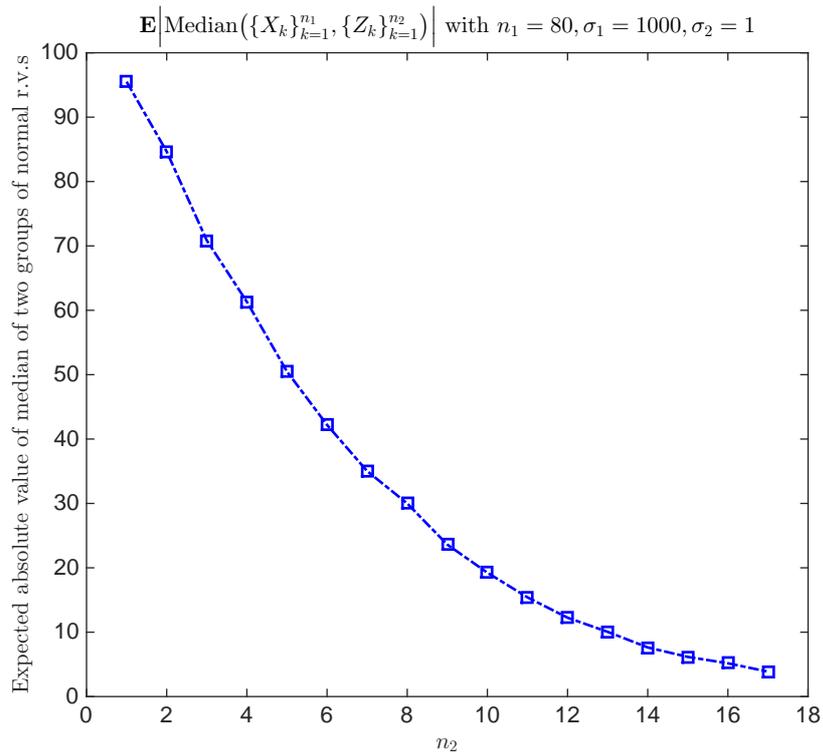}
\caption{Simulation of $\EE \big|{\rm Med}\big(\{X_k\}_{k=1}^{n_1}, \{Z_k\}_{k=1}^{n_2}\big) \big|$ with $X_k\sim\calN(0,\sigma_1^2)$ and $Z_k\sim\calN(0,\sigma_2^2)$ where $n_1=80$, $\sigma_1=1000$ and $\sigma_2=1$. The expectation is computed by the average of $5000$ repetitions for each $n_2$. It shows that the magnitude of the median decreases extremely fast. When $n_2\approx 16$, $\EE \big|{\rm Med}\big(\{X_k\}_{k=1}^{n_1}, \{Z_k\}_{k=1}^{n_2}\big) \big|\approx 5$.}
\label{fig:med_normal}
\end{figure}

The rest of the note will be organized as follows. 
In Section~\ref{sec:con_general}, we develop the concentration inequality for the sample  percentiles of non-identical random variables with general continuous distributions. The proof is simply based on the connection between the concentration of percentiles and the concentration of Bernoulli random variables. We apply those inequalities in Section~\ref{sec:scale-family} to obtain the bounds for the median of non-identical random variables from the scale family distributions including the normal distribution, Cauchy distribution and Laplace distribution. 
Bounds for the general percentiles around the median of normal random variables are presented in Section~\ref{sec:normal_general}.

\section{Concentration inequality of percentiles for general continuous distributions}\label{sec:con_general}
We denote the cumulative distribution function of $X_k$ by $F_k(x)$, i.e., 
$$
F_k(x)=\PP\big(X_k\leq x\big)\quad \textrm{for all } k=1,\cdots,n.
$$
We assume that $F_k(\cdot)$ is continuous in its support for all $1\leq k\leq n$. For all $t\in\RR$, we define
$$
F_N(t)=\frac{1}{n}\cdot\sum_{k=1}^n F_k(t) \in [0,1]
$$
and its corresponding inverse function
$$
F_N^{-1}(p):=\inf\big\{x\in\RR:\ F_N(x)\geq p \big\},\quad \forall\ p\in(0,1).
$$
\begin{lemma}\label{lem:Bern}
For any $t\in\RR$ and $p\in(0,1)$, let $B_1,\cdots,B_n$ denote independent Bernoulli random variables with
$$
\PP(B_k=1)=1-F_k(t)\quad  \textrm{for all } k=1,\cdots,n.
$$
Then,
\begin{equation}\label{eq:Bern_1}
\PP\big(X^{(\floor{pn})}\geq t\big)=\PP\Big(\sum_{k=1}^n B_k\geq \floor{pn}\Big)
\end{equation}
and
\begin{equation}\label{eq:Bern_2}
\PP\big(X^{(\floor{pn})}\leq t\big)=\PP\Big(\sum_{k=1}^n B_k< \floor{pn}\Big).
\end{equation}
\end{lemma}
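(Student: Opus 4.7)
The plan is to rephrase the order-statistic event $\{X^{(\floor{pn})}\geq t\}$ as a combinatorial counting event on independent indicators, and then read off both identities in turn. First I would introduce $B_k:=\mathbbm{1}\{X_k>t\}$ for $k=1,\dots,n$. Continuity of $F_k$ at $t$ gives $\PP(X_k>t)=1-F_k(t)$, so each $B_k$ is Bernoulli with exactly the stated success probability; independence of the $X_k$ transfers directly to independence of the $B_k$. Hence the $(B_k)$ constructed this way have the same joint law as the abstract Bernoullis in the statement, and it suffices to prove the identities with this concrete choice.

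The key observation is that $\sum_{k=1}^n B_k=\#\{k:X_k>t\}$ counts how many variables strictly exceed $t$. Since the order statistics $X^{(1)}\geq\cdots\geq X^{(n)}$ list the $X_k$ in decreasing order, $X^{(\floor{pn})}\geq t$ holds if and only if at least $\floor{pn}$ of the $X_k$ are $\geq t$. By continuity of each $F_k$, the event $\{X_k=t\}$ is null for every $k$, so almost surely this is equivalent to at least $\floor{pn}$ of the $X_k$ being strictly greater than $t$, i.e.\ to $\sum_{k=1}^n B_k\geq\floor{pn}$. Taking probabilities yields (\ref{eq:Bern_1}).

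To obtain (\ref{eq:Bern_2}), I would take complements and use continuity once more: $\PP(X^{(\floor{pn})}\leq t)=1-\PP(X^{(\floor{pn})}>t)=1-\PP(X^{(\floor{pn})}\geq t)$, where the second equality uses $\PP(X^{(\floor{pn})}=t)=0$, which follows from continuity of the $F_k$ since $X^{(\floor{pn})}=t$ forces $X_k=t$ for some $k$. Applying (\ref{eq:Bern_1}) and the trivial identity $1-\PP(\sum_k B_k\geq\floor{pn})=\PP(\sum_k B_k<\floor{pn})$ finishes the argument.

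There is no real obstacle here: the lemma is essentially a bookkeeping statement relating a rank event to a binomial-type sum of independent indicators. The only subtlety worth flagging is the handling of ties at the boundary $X_k=t$, which is dispatched uniformly by the blanket continuity hypothesis on each $F_k$; without continuity one would have to separately account for the probability mass at $t$, but under the stated assumption that issue disappears.
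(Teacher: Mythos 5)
Your proposal is correct and follows essentially the same route as the paper: identify $\{X^{(\floor{pn})}\geq t\}$ with the event that at least $\floor{pn}$ of the $X_k$ exceed $t$, realize the $B_k$ as the corresponding indicators, and obtain (\ref{eq:Bern_2}) by complementation. You are merely more explicit than the paper about using continuity of the $F_k$ to handle ties at $t$ and to justify $\PP(X^{(\floor{pn})}=t)=0$, which is a welcome but inessential refinement.
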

\begin{proof}[Proof of Lemma~\ref{lem:Bern}]
It suffices to prove eq. (\ref{eq:Bern_1}). 
By the definition of order statistics $X^{(1)}\geq\cdots\geq X^{(n)}$, 
$$
\big\{X^{(\floor{pn})}\geq t\big\}=\big\{\#\{1\leq k\leq n: X_k\geq t\}\geq \floor{pn}\big\}.
$$
The latter event is equivalent to $\big\{\sum_{k=1}^n B_k\geq \floor{pn}\big\}$ where $B_k\sim {\rm Bern}\big(1-F_k(t)\big)$, which leads to eq. (\ref{eq:Bern_1}). 
\end{proof}
By Lemma~\ref{lem:Bern}, the concentration of $X^{(\floor{pn})}$ is translated into the concentration of the sum of independent Bernoulli random variables. Observe, with the definitions in Lemma~\ref{lem:Bern}, that
$$
\EE \sum_{k=1}^n B_k=n\big(1-F_N(t)\big).
$$
We obtain the following concentration inequalities of the $100(1-p)$\%-th percentile $X^{(\floor{pn})}$. 
\begin{theorem}\label{thm:general}
For any $p\in(0,1)$ such that $pn\geq 1$ and $t>0$, denote by
$$
\varphi_F^+(t)=\frac{\floor{pn}}{n}+F_N\big(F_N^{-1}(1-p)+t\big)-1
$$
and
$$
\varphi_F^-(t)=\frac{\floor{pn}}{n}+F_N\big(F_N^{-1}(1-p)-t\big)-1.
$$
Then, for all $t>0$ such that $\varphi_F^+(t)\geq 0$, the following inequality holds
$$
\PP\big(X^{(\floor{pn})}\geq F_N^{-1}(1-p)+t\big)\leq \exp\Big(-2n\cdot\big[\varphi_F^+(t)\big]^2\Big).
$$
Similarly, for all $t>0$, the following inequality holds
$$
\PP\big(X^{(\floor{pn})}\leq F_N^{-1}(1-p)-t\big)\leq \exp\Big(-2n\cdot\big[\varphi_F^-(t)\big]^2\Big).
$$
\end{theorem}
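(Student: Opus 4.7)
The plan is to apply Lemma~\ref{lem:Bern} to reduce both tail bounds to one-sided deviation estimates for sums of independent Bernoulli random variables, and then invoke Hoeffding's inequality. The key observation is that the quantities $\varphi_F^\pm(t)$ are precisely (up to the factor $n$) the gap between the threshold $\floor{pn}$ and the mean of the relevant Bernoulli sum, so the cleanly-stated deviation $n\varphi_F^\pm(t)$ is exactly what Hoeffding's inequality controls.

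For the upper tail, fix $t>0$ and set $B_k \sim \mathrm{Bern}\bigl(1-F_k(F_N^{-1}(1-p)+t)\bigr)$ independently. By Lemma~\ref{lem:Bern},
\[
\PP\bigl(X^{(\floor{pn})} \geq F_N^{-1}(1-p)+t\bigr) = \PP\Bigl(\textstyle\sum_{k=1}^n B_k \geq \floor{pn}\Bigr),
\]
and the mean of $\sum_k B_k$ equals $n\bigl(1-F_N(F_N^{-1}(1-p)+t)\bigr)$. Subtracting, the threshold exceeds the mean by exactly
\[
\floor{pn} - n\bigl(1-F_N(F_N^{-1}(1-p)+t)\bigr) = n\,\varphi_F^+(t).
\]
Since we assume $\varphi_F^+(t)\geq 0$, Hoeffding's inequality applied to the bounded independent variables $B_k\in[0,1]$ yields
\[
\PP\Bigl(\textstyle\sum_{k=1}^n B_k - \EE \sum_{k=1}^n B_k \geq n\varphi_F^+(t)\Bigr) \leq \exp\bigl(-2n[\varphi_F^+(t)]^2\bigr),
\]
which is the first claimed inequality.

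For the lower tail, repeat the construction with $B_k\sim\mathrm{Bern}\bigl(1-F_k(F_N^{-1}(1-p)-t)\bigr)$. Lemma~\ref{lem:Bern} gives
\[
\PP\bigl(X^{(\floor{pn})}\leq F_N^{-1}(1-p)-t\bigr) = \PP\Bigl(\textstyle\sum_{k=1}^n B_k < \floor{pn}\Bigr),
\]
and an analogous computation shows $\floor{pn} - \EE \sum_{k=1}^n B_k = n\varphi_F^-(t)$. Here one must check the sign: since $F_N$ is monotone and $\floor{pn}/n \leq p$, we have $\varphi_F^-(t) \leq \frac{\floor{pn}}{n} + F_N(F_N^{-1}(1-p)) - 1 \leq p - p = 0$, so $\floor{pn}$ lies below the mean, and Hoeffding's one-sided lower-tail bound applies to give the factor $\exp(-2n[\varphi_F^-(t)]^2)$.

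The argument is almost entirely bookkeeping once Lemma~\ref{lem:Bern} is in hand; the only subtle point, and the step I would double-check carefully, is the sign verification for $\varphi_F^-(t)$, since one needs the event $\{\sum B_k < \floor{pn}\}$ to represent a genuine downward deviation from the mean (otherwise Hoeffding's bound is vacuous or mis-oriented). Continuity of each $F_k$ ensures $F_N(F_N^{-1}(1-p))=1-p$, which makes this verification immediate and also justifies the strict-inequality form appearing in equation~(\ref{eq:Bern_2}).
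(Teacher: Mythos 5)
Your proof is correct and follows essentially the same route as the paper's: reduce both tails to sums of independent Bernoulli variables via Lemma~\ref{lem:Bern} and apply Hoeffding's inequality to the deviation $n\varphi_F^{\pm}(t)$. Your explicit check that $\varphi_F^-(t)\le 0$ holds automatically (using $F_N\big(F_N^{-1}(1-p)\big)=1-p$ by continuity and $\floor{pn}/n\le p$) is a welcome addition, since the paper's proof only asserts the lower-tail bound ``as long as $\varphi_F^-(t)\le 0$'' without noting that this condition is always satisfied.
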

\begin{proof}
By Lemma~\ref{lem:Bern}, we get
$$
\PP\big(X^{(\floor{pn})}\geq F_N^{-1}(1-p)+t\big)=\PP\Big(\sum_{k=1}^n \tilde B_k\geq \floor{pn}\Big)
$$
where $\{\tilde B_k\}_{k=1}^n$ are independent Bernoulli random variables with
$$
\PP\big(\tilde B_k=1\big)=1-F_k\big(F_N^{-1}(1-p)+t\big).
$$
Then, by Hoeffding's inequality (\cite{hoeffding1963probability}), we get 
\begin{align*}
\PP\big(X^{(\floor{pn})}\geq& F_N^{-1}(1-p)+t\big)=\PP\Big(\sum_{k=1}^n \big(\tilde B_k-\EE \tilde B_k\big)\geq \floor{pn}-\sum_{k=1}^n\EE \tilde B_k\Big)\\
=&\PP\Big(\sum_{k=1}^n \big(\tilde B_k-\EE \tilde B_k\big)\geq \floor{pn}+n\cdot F_N\big(F_N^{-1}(1-p)+t\big)-n\Big)\\
=&\PP\Big(\sum_{k=1}^n \big(\tilde B_k-\EE \tilde B_k\big)\geq n\cdot \varphi_F^+(t)\Big)
\leq \exp\Big(-2n\cdot \big[\varphi_F^+(t)\big]^2\Big)
\end{align*}
as long as $\varphi_F^+(t)\geq 0$. Therefore, we conclude with 
$$
\PP\big(X^{(\floor{pn})}\geq F_N^{-1}(1-p)+t\big)\leq \exp\Big(-2n\cdot\big[\varphi_F^+(t)\big]^2\Big).
$$
In an identical fashion, we can prove that
$$
\PP\big(X^{(\floor{pn})}\leq F_N^{-1}(1-p)-t\big)\leq \exp\Big(-2n\cdot\big[\varphi_F^-(t)\big]^2\Big)
$$
as long as $\varphi_F^-(t)\leq 0$.
\end{proof}
Note that we applied Hoeffding's inequality to prove Theorem~\ref{thm:general} which gives neat results. However, Hoeffding's inequality is loose when ${\rm Var}(\tilde B_k)$ is small, namely, when $\PP(\tilde B_k=1)$ is small. As a result, the tail inequalities in Theorem~\ref{thm:general} are sharp only for $p$ around $0.5$. See Remark~\ref{rmk:cauchy} in Section~\ref{sec:cauchy} about the $1$st order statistics of independent Cauchy distribution. 

Theorem~\ref{thm:general} can be further simplified if the cumulative distribution functions admit probability density functions satisfying the regularity conditions. For any $p>0$, we define the binary number $I_p$ as 
\begin{equation}\label{eq:I_p}
I_p=\begin{cases}
0,&\textrm{ if } np \textrm{ is an integer},\\
1,&\textrm{ otherwise}. 
\end{cases}
\end{equation}
\begin{corollary}\label{cor:general}
For any $p\in(0,1)$ such that $pn\geq 1$, we denote by $\tau_{N,p}=F_N^{-1}(1-p)$. Suppose that the cumulative distribution functions $F_k(\cdot)$ are differentiable and $f_k(\cdot)=\frac{d F_k(x)}{dx}$ for all $1\leq k\leq N$. 
For any $t>0$, we define $s_{N,k}(t,p)$ as
\begin{equation}\label{eq:cor_Fk}
s_{N,k}(t,p)=\min_{u\in[\tau_{N,p}-t, \tau_{N,p}+t]} \big|f_k(u) \big|
\end{equation}
Then, for any $0<t$ such that $t\sum_{k=1}^n s_{N,k}(t,p)\geq 2I_p$ with $I_p$ defined in (\ref{eq:I_p}), the following inequalities hold
$$
\PP\big(X^{(\floor{pn})}\geq F_N^{-1}(1-p)+t\big)\leq \exp\bigg(-\Big(\sum_{k=1}^ns_{N,k}(t,p)\Big)^2\cdot\frac{2t^2}{n(I_p+1)^2}\bigg)
$$
and
$$
\PP\big(X^{(\floor{pn})}\leq F_N^{-1}(1-p)-t\big)\leq \exp\bigg(-\Big(\sum_{k=1}^ns_{N,k}(t,p)\Big)^2\cdot \frac{2t^2}{n}\bigg).
$$
\end{corollary}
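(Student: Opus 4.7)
The plan is to apply Theorem~\ref{thm:general} after replacing the abstract quantities $\varphi_F^{\pm}(t)$ by concrete lower bounds in terms of $\sum_{k=1}^n s_{N,k}(t,p)$. Since each $F_k$ is continuous, so is $F_N$, which yields the exact identity $F_N(\tau_{N,p})=1-p$. By the fundamental theorem of calculus together with the defining property $f_k(u)\geq s_{N,k}(t,p)$ on $[\tau_{N,p}-t,\tau_{N,p}+t]$, I would write
$$
F_N(\tau_{N,p}+t)-(1-p)\;=\;\frac{1}{n}\sum_{k=1}^{n}\int_{\tau_{N,p}}^{\tau_{N,p}+t}f_k(u)\,du\;\geq\;\frac{t}{n}\sum_{k=1}^{n}s_{N,k}(t,p),
$$
and symmetrically $F_N(\tau_{N,p}-t)-(1-p)\leq -\tfrac{t}{n}\sum_{k=1}^{n}s_{N,k}(t,p)$.

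For the upper tail I combine this with the elementary inequality $\floor{pn}-pn\geq -I_p$ to obtain
$$
\varphi_F^+(t)\;=\;\frac{\floor{pn}-pn}{n}+\bigl[F_N(\tau_{N,p}+t)-(1-p)\bigr]\;\geq\;\frac{t\sum_{k=1}^n s_{N,k}(t,p)-I_p}{n}.
$$
When $I_p=0$ this expression is already $\frac{t\sum_k s_{N,k}(t,p)}{n(I_p+1)}$; when $I_p=1$ the hypothesis $t\sum_k s_{N,k}(t,p)\geq 2$ lets me use $x-1\geq x/2$ on the numerator, giving the same form. In particular $\varphi_F^+(t)\geq 0$, so Theorem~\ref{thm:general} applies and squaring delivers the stated upper-tail bound with the $(I_p+1)^2$ factor.

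The lower tail is cleaner, because the floor correction $\floor{pn}-pn\leq 0$ only helps, and therefore
$$
\varphi_F^-(t)\;\leq\;-\frac{t}{n}\sum_{k=1}^{n}s_{N,k}(t,p)\;\leq\;0
$$
holds automatically. Feeding this into Theorem~\ref{thm:general} produces the sharper bound without the $(I_p+1)^2$ factor. No genuine obstacle is anticipated; the only subtlety is bookkeeping for the integer versus non-integer cases of $np$, which is precisely why the hypothesis $t\sum_k s_{N,k}(t,p)\geq 2I_p$ is vacuous when $I_p=0$ and is needed only to absorb the $-1/n$ correction when $I_p=1$.
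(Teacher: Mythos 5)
Your proposal is correct and follows essentially the same route as the paper: lower-bound $\varphi_F^+(t)$ via $\floor{pn}\geq pn-I_p$, the identity $F_N(\tau_{N,p})=1-p$, and the integral bound $F_N(\tau_{N,p}+t)-F_N(\tau_{N,p})\geq \tfrac{t}{n}\sum_k s_{N,k}(t,p)$, then absorb the $-I_p/n$ term using the hypothesis $t\sum_k s_{N,k}(t,p)\geq 2I_p$ and invoke Theorem~\ref{thm:general}. Your explicit case split on $I_p$ and the remark that $\varphi_F^-(t)\leq 0$ holds automatically (which is what licenses the lower-tail application of Theorem~\ref{thm:general}) are just slightly more careful renderings of the same argument.
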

\begin{proof}[Proof of Corollary~\ref{cor:general}]
By the definition of $\varphi_F^+(\cdot)$ in Theorem~\ref{thm:general}, we immediately obtain
\begin{align*}
\varphi_F^+(t)\geq& p-\frac{I_p}{n}+F_N(\tau_{N,p}+t)-1=F_N(\tau_{N,p}+t)-F_N(\tau_{N,p})-\frac{I_p}{n}\\
=&-\frac{I_p}{n}+\frac{1}{n}\sum_{k=1}^n \big|F_k(\tau_{N,p}+t)-F_k(\tau_{N,p})\big|
=-\frac{I_p}{n}+\frac{1}{n}\sum_{k=1}^n \int_{0}^{t}f_k(u+\tau_{N,p})du.
\end{align*}
By the regularity conditions of $f_k(\cdot)$ as in (\ref{eq:cor_Fk}), we get
$$
\varphi_F^{+}(t)\geq -\frac{I_p}{n}+\frac{t}{n}\cdot \sum_{k=1}^n s_{N,k}(t,p)\geq \frac{t}{n(I_p+1)}\cdot\sum_{k=1}^n s_{N,k}(t,p)
$$
as long as $t\sum_{k=1}^N s_{N,k}(t,p)\geq 2I_p$. Similarly, we get
$$
\varphi_F^-(t)\leq -\frac{t}{n}\cdot \sum_{k=1}^n s_{N,k}(t,p)
$$
which concludes the proof in view of Theorem~\ref{thm:general}. 
\end{proof}

\section{Bounds of median for general scale family of distributions}\label{sec:scale-family}
In this section, we prove the general concentration bounds for the median of independent random variables obeying the scale family distributions. Then, we apply the results to several specific distributions. 
 Suppose that $X_1,\cdots,X_n$ are independent random variables with corresponding cumulative distribution functions $F_k(\cdot)$ for $k=1,\cdots,n$ which belong to the scale family of distributions.

\begin{definition}[\it scale family]\label{def:scale-family}
There exists a continuous cumulative distribution function $D(x)$ defined on $\RR$ and a sequence of positive numbers $\sigma_1,\cdots,\sigma_n$ such that 
$$
F_k(x)=D\Big(\frac{x}{\sigma_k}\Big)\quad \textrm{ for all } k=1,\cdots,n.
$$
\end{definition}

The {\it scale family}  covers many important families of distributions, e.g., {\it normal distribution, Cauchy distribution, uniform distribution, logistic distribution}. In Theorem~\ref{thm:scale-family}, we show that the concentration of the median of independent random variables from {\it scale family} distributions depends on the harmonic mean of the corresponding scale parameters.

\begin{theorem}\label{thm:scale-family}
Suppose that the independent random variables $\{X_k\}_{k=1}^n$ with cumulative distribution functions $\{F_k(\cdot)\}_{k=1}^n$ belong to the scale family as in Definition~\ref{def:scale-family}.
Moreover, assume $D(0)=\frac{1}{2}$ and $D(x)$ admits continuous density function $d(x)$. If $n$ is an even number, then for all $t>0$, we have 
$$
\PP(|M|\geq t)\leq 2\exp\bigg(-\Big(\sum_{k=1}^n\sigma_k^{-1}\Big)^2\cdot \frac{2t^2}{n}\cdot \min_{\substack{|u|\leq t\\ 1\leq k\leq n}}d^2\Big(\frac{u}{\sigma_k}\Big)\bigg)
$$
where $M={\rm Med}\big(X_1,\cdots,X_n\big)$. If $n$ is an odd number, then for all $t>0$ with $t\cdot\min_{|u|\leq t, 1\leq k\leq n}d(u/\sigma_k)\cdot \sum_{k=1}^n\sigma_k^{-1}\geq 2$, we have 
$$
\PP(|M|\geq t)\leq 2\exp\bigg(-\Big(\sum_{k=1}^n\sigma_k^{-1}\Big)^2\cdot \frac{t^2}{2n}\cdot \min_{\substack{|u|\leq t\\ 1\leq k\leq n}}d^2\Big(\frac{u}{\sigma_k}\Big)\bigg).
$$
\end{theorem}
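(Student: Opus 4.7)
The plan is to specialize Corollary~\ref{cor:general} to $p=\tfrac12$ and unwind all of its ingredients using the scale-family structure. First, the hypothesis $D(0)=\tfrac12$ together with $F_k(x)=D(x/\sigma_k)$ gives $F_k(0)=\tfrac12$ for every $k$, hence $F_N(0)=\tfrac12$ and $\tau_{N,1/2}=F_N^{-1}(\tfrac12)=0$. Consequently the two one-sided events $\{X^{(\floor{n/2})}\geq \tau_{N,1/2}+t\}$ and $\{X^{(\floor{n/2})}\leq \tau_{N,1/2}-t\}$ appearing in Corollary~\ref{cor:general} together dominate $\{|M|\geq t\}$ by a union bound, which explains the factor of $2$ in the final bound.

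Second, the scale-family form gives densities $f_k(u)=\sigma_k^{-1}d(u/\sigma_k)$, so the quantity $s_{N,k}(t,\tfrac12)$ from Corollary~\ref{cor:general} becomes
$$
s_{N,k}(t,\tfrac12)=\sigma_k^{-1}\min_{|u|\leq t}d(u/\sigma_k)\geq \sigma_k^{-1}\cdot \min_{\substack{|u|\leq t\\ 1\leq j\leq n}}d(u/\sigma_j).
$$
Summing over $k$ yields the uniform lower bound
$$
\sum_{k=1}^n s_{N,k}(t,\tfrac12)\geq \Big(\sum_{k=1}^n\sigma_k^{-1}\Big)\cdot \min_{\substack{|u|\leq t\\ 1\leq j\leq n}}d(u/\sigma_j),
$$
which is precisely the factor appearing in the theorem statement.

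Finally, I would separate the two parities of $n$ through the indicator $I_{1/2}$ defined in (\ref{eq:I_p}). For even $n$, $I_{1/2}=0$, so the precondition $t\sum_k s_{N,k}(t,\tfrac12)\geq 2I_p$ is vacuous; Corollary~\ref{cor:general} then delivers both one-sided tail bounds of the form $\exp\big(-(\sum_k s_{N,k})^2\cdot 2t^2/n\big)$, and after inserting the above lower bound and applying the union bound we obtain the even-$n$ claim. For odd $n$, $I_{1/2}=1$, so the upper-tail bound in Corollary~\ref{cor:general} picks up an extra factor $(I_p+1)^{-2}=1/4$ and weakens to $\exp\big(-(\sum_k s_{N,k})^2\cdot t^2/(2n)\big)$; correspondingly the precondition $t\sum_k s_{N,k}(t,\tfrac12)\geq 2$ translates, via the same minimum-density lower bound, into the extra hypothesis stated in the theorem. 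The lower-tail rate $2t^2/n$ dominates the upper-tail rate $t^2/(2n)$, so it is absorbed and doubling produces the odd-$n$ claim. No single step is hard; the main care lies in tracking the $I_p$-dependent constants across the two parities, since Corollary~\ref{cor:general} already performs the probabilistic heavy lifting.
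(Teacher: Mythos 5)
Your proposal is correct and follows essentially the same route as the paper: specialize Corollary~\ref{cor:general} to $p=\tfrac12$, use $D(0)=\tfrac12$ to get $\tau_{N,1/2}=0$, lower-bound $\sum_k s_{N,k}(t,\tfrac12)$ by $\big(\sum_k\sigma_k^{-1}\big)\min_{|u|\le t,\,k}d(u/\sigma_k)$, and split on the parity of $n$ via $I_{1/2}$. If anything, your write-up is slightly more careful than the paper's in making the union bound over the two one-sided tails and the $I_p$-dependent constants explicit.
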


\begin{proof}[Proof of Theorem~\ref{thm:scale-family}]
Let $p=1/2$. 
Following the notations in Corollary~\ref{cor:general}, we can write the probability density function $f_k(x)$ as 
$$
f_k(x)=\frac{1}{\sigma_k}\cdot d\Big(\frac{x}{\sigma_k}\Big),\quad 1\leq k\leq n.
$$
Because $D(0)=\frac{1}{2}$, we get $\tau_{N,p}=0$ as defined in Corollary~\ref{cor:general}. Then, for any $t>0$, we get 
\begin{align*}
s_{N,k}(t,p)=\frac{1}{\sigma_k}\cdot \min_{|u|\leq t}d\Big(\frac{u}{\sigma_k}\Big)
\end{align*}
and then
\begin{align*}
\sum_{k=1}^n s_{N,k}(t,p)\geq \min_{\substack{|u|\leq t\\ 1\leq k\leq n}}d\Big(\frac{u}{\sigma_k}\Big)\cdot \sum_{k=1}^n \sigma_k^{-1}.
\end{align*}
If $n$ is an even number, we immediately get, by Corollary~\ref{cor:general}, that 
$$
\PP(|M|\geq t)\leq 2\exp\bigg(-\Big(\sum_{k=1}^n\sigma_k^{-1}\Big)^2\cdot \frac{2t^2}{n}\cdot \min_{\substack{|u|\leq t\\ 1\leq k\leq n}}d^2\Big(\frac{u}{\sigma_k}\Big)\bigg)
$$
If $n$ is an odd number and $t\min_{\substack{|u|\leq t\\ 1\leq k\leq n}}d(u/\sigma_k)\cdot \sum_{k=1}^n\sigma_k^{-1}\geq 2$, then, by Corollary~\ref{cor:general}, we get that 
$$
\PP(|M|\geq t)\leq 2\exp\bigg(-\Big(\sum_{k=1}^n\sigma_k^{-1}\Big)^2\cdot \frac{t^2}{2n}\cdot \min_{\substack{|u|\leq t\\ 1\leq k\leq n}}d^2\Big(\frac{u}{\sigma_k}\Big)\bigg).
$$
\end{proof}
Now we apply Theorem~\ref{thm:scale-family} to some important distributions, including heavy tailed distributions, to present the explicit concentration bounds for the medians of those corresponding random variables. 

\subsection{Median of independent normal random variables}\label{sec:normal}
We first develop bounds for the median of independent non-identical centered normal random variables. Let $X_k\sim\calN(0,\sigma_k^2)$ be independent normal random variables for $k=1,\cdots,n$. Similarly as above, we denote their median by
$$
M={\rm Med}\big(X_1,\cdots,X_n\big).
$$
As introduced in Section~\ref{sec:intro}, in the case of identical distributions with $\sigma_k\equiv \sigma$, it is well known that the median $|M|=O_P\big(\frac{\sigma}{n^{1/2}}\big)$. For general $\sigma_k$'s, the upper bound of $|M|$ is characterized in the following Corollary.

\begin{corollary}\label{cor:normal}
Let $X_k\sim\calN(0,\sigma_k^2)$ for $k=1,\cdots,n$ be independent. Let $t>0$ such that
\begin{equation}\label{eq:normal_cond}
 \frac{n^{1/2}t^{1/2}}{0.35\sum_{k=1}^n \sigma_k^{-1}}\leq \frac{1}{2}\min_{1\leq k\leq n}\sigma_k.
\end{equation}
If $n$ is an even number, then,  with probability at least $1-2e^{-2t}$,
$$
\big|M \big|\leq  \frac{n^{1/2}t^{1/2}}{0.35\sum_{k=1}^n \sigma_k^{-1}}
$$
where $M={\rm Med}\big(X_1,\cdots,X_n\big)$. On the other hand, if $n$ is an odd number and $tn\geq 4$, then with probability at least $1-2e^{-t/2}$,
$$
\big|M \big|\leq  \frac{n^{1/2}t^{1/2}}{0.35\sum_{k=1}^n \sigma_k^{-1}}.
$$
\end{corollary}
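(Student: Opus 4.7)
The plan is to specialize Theorem~\ref{thm:scale-family} to the normal scale family with $D=\Phi$ and $d=\phi$, and then choose the Theorem~\ref{thm:scale-family} radius, call it $t^\ast$, so that the deviation probability becomes $2e^{-2t}$ (even $n$) or $2e^{-t/2}$ (odd $n$). Since $\Phi(0)=\tfrac12$ and $X_k=\sigma_k Z_k$ with $Z_k\sim\cN(0,1)$, the family indeed fits Definition~\ref{def:scale-family}, so Theorem~\ref{thm:scale-family} applies directly.

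The first real step is to lower bound the quantity $\min_{|u|\leq t^\ast,\,1\leq k\leq n}\phi(u/\sigma_k)$ that appears in Theorem~\ref{thm:scale-family}. Because $\phi$ is decreasing in $|x|$, this minimum equals $\phi(t^\ast/\min_k\sigma_k)=\tfrac{1}{\sqrt{2\pi}}\exp\!\big(-(t^\ast)^2/(2\min_k\sigma_k^2)\big)$. The point of condition (\ref{eq:normal_cond}) is precisely that $t^\ast\leq \tfrac12\min_k\sigma_k$, so $(t^\ast)^2/(2\min_k\sigma_k^2)\leq 1/8$ and hence
\[
\min_{|u|\leq t^\ast,\,1\leq k\leq n}\phi(u/\sigma_k)\ \geq\ \frac{1}{\sqrt{2\pi}}e^{-1/8}\ \geq\ 0.35 .
\]
This is the only distribution-specific computation; everything else is bookkeeping.

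Now I substitute this lower bound into Theorem~\ref{thm:scale-family}. Choosing $t^\ast=\dfrac{n^{1/2}t^{1/2}}{0.35\sum_k\sigma_k^{-1}}$ (which coincides with the bound claimed in the corollary and is admissible by (\ref{eq:normal_cond})), the exponent for even $n$ becomes
\[
\Big(\textstyle\sum_k\sigma_k^{-1}\Big)^{2}\cdot\frac{2(t^\ast)^2}{n}\cdot (0.35)^2\ =\ 2t,
\]
yielding $\PP(|M|\geq t^\ast)\leq 2e^{-2t}$, as desired. For odd $n$ the same substitution with the extra factor $1/4$ from Theorem~\ref{thm:scale-family} gives exponent $t/2$, hence $2e^{-t/2}$. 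In the odd case one must also verify the side condition $t^\ast\cdot \min_{|u|\leq t^\ast,k}d(u/\sigma_k)\cdot \sum_k\sigma_k^{-1}\geq 2$; plugging in the chosen $t^\ast$ and the bound $\min\phi\geq 0.35$ reduces this to $n^{1/2}t^{1/2}\geq 2$, i.e.\ $tn\geq 4$, which is exactly the hypothesis stated.

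I do not anticipate a real obstacle: once one notices that the constant $0.35\approx \frac{1}{\sqrt{2\pi}}e^{-1/8}$ is engineered so that (\ref{eq:normal_cond}) forces the density lower bound $\phi\geq 0.35$, the proof collapses into algebraic matching of exponents. The mildly delicate points are (i) tracking that the minimum of $\phi(u/\sigma_k)$ over both $u$ and $k$ is attained at $|u|=t^\ast$ and at $\sigma_k=\min_k\sigma_k$, and (ii) in the odd-$n$ case, propagating the auxiliary inequality from Theorem~\ref{thm:scale-family} through the same substitution to recover the clean hypothesis $tn\geq 4$.
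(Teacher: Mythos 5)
Your proposal is correct and follows essentially the same route as the paper: specialize Theorem~\ref{thm:scale-family} to the normal scale family, use condition (\ref{eq:normal_cond}) to force $\min\phi(u/\sigma_k)\geq \frac{1}{\sqrt{2\pi}}e^{-1/8}\geq 0.35$, and choose the radius so the exponents match. If anything, your write-up is slightly more careful than the paper's (which writes the minimum as an equality $d(1/2)$ rather than the inequality $\geq d(1/2)$, and does not spell out the reduction of the odd-$n$ side condition to $tn\geq 4$).
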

\begin{proof}[Proof of Corollary~\ref{cor:normal}]
We will directly apply Theorem~\ref{thm:scale-family} to prove this corollary. We set the $t$ in Theorem~\ref{thm:scale-family} as 
$$
t=\frac{n^{1/2}\cdot s^{1/2}}{0.35\sum_{k=1}^n\sigma_k^{-1}}
$$ 
for some $s>0$. Now, we assume that $t\leq min_{1\leq k\leq n}\sigma_k/2$. Then, following the notations in Theorem~\ref{thm:scale-family}, we get
\begin{align*}
\min_{\substack{|u|\leq t\\ 1\leq k\leq n}} d\Big(\frac{u}{\sigma_k}\Big)=d\Big(\frac{t}{\min_{1\leq k\leq n}\sigma_k}\Big)=d\Big(\frac{1}{2}\Big)
\end{align*}
where $d(\cdot)$ is actually the p.m.f. of the standard normal distribution. Then, we get 
$$
\min_{\substack{|u|\leq t\\ 1\leq k\leq n}} d\Big(\frac{u}{\sigma_k}\Big)\geq \frac{1}{\sqrt{2\pi }}e^{-1/8}\geq 0.35.
$$
If $n$ is an even number, by Theorem~\ref{thm:scale-family}, we immediately obtain
$$
\PP\bigg(|M|\geq \frac{n^{1/2}s^{1/2}}{0.35\sum_{k=1}^n\sigma_k^{-1}}\bigg)\leq 2\exp(-2s)
$$
If $n$ is an odd number and $ns\geq 4$, then by Theorem~\ref{thm:scale-family}, we get
$$
\PP\bigg(|M|\geq \frac{n^{1/2}s^{1/2}}{0.35\sum_{k=1}^n\sigma_k^{-1}}\bigg)\leq 2\exp(-s/2).
$$
Therefore, we conclude the proof by changing $s$ to $t$.  
\end{proof}
\begin{remark}
Recall that the sample mean $\bar{X}=n^{-1}\sum_{k=1}^n X_k$ has normal distribution with zero mean and variance $n^{-2}\sum_{k=1}^n\sigma_k^2$ implying that (by Z table)
$$
\PP\bigg(\big|\bar{X}\big|\geq \frac{\big(\sum_{k=1}^n\sigma_k^2\big)^{1/2}}{n}\bigg)\geq 0.30.
$$
By the famous harmonic mean inequality:
$$
\frac{n}{\sum_{k=1}^n \sigma_k^{-1}}\leq \bigg(\frac{\sum_{k=1}^n\sigma_k^2}{n}\bigg)^{1/2}
$$
, we conclude that  $\EE\big|{\rm Med}\big(X_1,\cdots,X_n\big)\big|\lesssim \EE\big|{\rm Mean}\big(X_1,\cdots,X_n\big)\big|$.
\end{remark}

\begin{remark}
Clearly, $\sum_{k=1}^n \sigma_k^{-1}\geq n\big(\max_{1\leq k\leq n} \sigma_k\big)^{-1}$.  Therefore, Corollary~\ref{cor:normal} immediately imply the conservative bound
$$
\PP\bigg(\big| M\big|\geq 2.86\sqrt{\frac{t}{n}}\cdot \max_{1\leq k\leq n} \sigma_k\bigg)\leq 2\exp\{-2t\}
$$
as long as condition holds as in eq. (\ref{eq:normal_cond}). It is intuitively understood as the median of a group of small-variance random variables is often smaller than the median of a group of large-variance random variables.
\end{remark}

\begin{remark}
We note that the constant on the right hand side of eq. (\ref{eq:normal_cond}) is unnecessarily to be $\frac{1}{2}$. In fact, any positive bounded constant suffices to take the same role in which case the constant in the later claim should be adjusted accordingly.
\end{remark}

\begin{remark}
Recall the non-dispersion condition in eq. (\ref{eq:normal_cond}) which requires $n^{1/2}\lesssim \Big(\sum_{k=1}^n\sigma_k^{-1}\Big)\cdot \min_{1\leq k\leq n} \sigma_k$.  Basically, it requires that $\{\sigma_k\}_{k=1}^n$ shall not be too dispersed. Without loss of generality, assume $\sigma_1\leq \cdots\leq \sigma_n$ with $\sigma_1=1$. Moreover, suppose that $\sigma_k$ grows with $k$ like the order $\sigma_k\asymp k^{\alpha}$ for $\alpha>0$. Then, 
$$
\Big(\sum_{k=1}^n\sigma_k^{-1}\Big)\cdot \min_{1\leq k\leq n} \sigma_k\asymp \sum_{k=1}^n k^{-\alpha}.
$$
In order to guarantee the condition in eq. (\ref{eq:normal_cond}), it suffices to require $\alpha\leq \frac{1}{2}$, i.e., $\sigma_k$ shall not grow faster than $k^{1/2}$. 
\end{remark}

\begin{remark}
Let's compare with the existing results of the concentration inequalities for order statistics in the literature. The concentration inequality for the median of i.i.d. standard Gaussian random variables in \cite[Proposition~4.6]{boucheron2012concentration} shows that, when $n$ is even, for all $t>0$, 
$$
\PP\Big(M-\EE M > 4\sqrt{t/n\log 2}+4t\sqrt{2/n^2\log 2}\Big)\leq e^{-t}
$$
where $M$ is the median. It implies a sub-gaussian tail for $t\leq n/2$ and a sub-exponential tail for $t>n/2$. The sub-gaussian tail we obtained in Corollary~\ref{cor:normal} is due to the condition (\ref{eq:normal_cond}) which is equivalent to $t=O(n)$. 
\end{remark}

\subsection{Median of independent Cauchy random variables}\label{sec:cauchy}
Let $\{X_k\}_{k=1}^n$ be independent Cauchy random variables and for each $X_k$, its probability density function is given as
\begin{equation}\label{eq:cauchy_fk}
f_k(x)=\frac{1}{\pi \sigma_k}\cdot \frac{\sigma_k^2}{x^2+\sigma_k^2}\quad \forall\ x\in \RR
\end{equation}
for $\sigma_k>0$. The corresponding cumulative distribution function is written as
$$
F_k(x)=\int_{-\infty}^x f_k(t)dt=\frac{1}{\pi}\cdot {\rm arctan}\Big(\frac{x}{\sigma_k}\Big)+\frac{1}{2}
$$
implying that $F_k(0)=\frac{1}{2}$. Clearly, $\EE X_k$ and ${\rm Var}(X_k)$ are undefined, but ${\rm Median}(X_k)=0$ for all $1\leq k\leq n$. Since $\{F_k(x)\}_{k=1}^n$ belong to the scale family as in Definition~\ref{def:scale-family} with function 
$$
D(x)=\frac{1}{\pi}\cdot {\rm arctan}(x)+\frac{1}{2},
$$
we obtain its density function $d(u)=\frac{1}{\pi(x^2+1)}$ concluding that $\min_{|u|\leq 1/2}d(u)=\frac{4}{5\pi}$. By the proof of Corollary~\ref{cor:normal}, we immediately obtain the following concentration bound for the median of independent Cauchy random variables. 
\begin{corollary}\label{cor:cauchy}
Let $\{X_k\}_{k=1}^n$ be independent Cauchy random variables with corresponding probability density functions as in eq. (\ref{eq:cauchy_fk}). Let $t>0$ such that
$$
\frac{5\pi n^{1/2}t^{1/2}}{4\sum_{k=1}^n\sigma_k^{-1}}\leq \frac{1}{2}\min_{1\leq k\leq n}\sigma_k.
$$
If $n$ is an even number, then we get 
$$
\PP\bigg(|M|\geq \frac{5\pi n^{1/2}t^{1/2}}{4\sum_{k=1}^n\sigma_k^{-1}}\bigg)\leq 2\exp\{-2t\}
$$
where $M={\rm Med}\big(X_1,\cdots,X_n\big)$. On the other hand, if $n$ is an odd number and $nt\geq 4$, then we have 
$$
\PP\bigg(|M|\geq \frac{5\pi n^{1/2}t^{1/2}}{4\sum_{k=1}^n\sigma_k^{-1}}\bigg)\leq 2\exp\{-t/2\}. 
$$
\end{corollary}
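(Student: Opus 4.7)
\textbf{Proof plan for Corollary~\ref{cor:cauchy}.} The plan is to apply Theorem~\ref{thm:scale-family} in exactly the same manner as in the proof of Corollary~\ref{cor:normal}, with the only distribution-specific ingredient being a lower bound on the Cauchy density on a fixed interval. First, I would verify that the hypotheses of Theorem~\ref{thm:scale-family} are in force: the family $\{F_k\}_{k=1}^n$ is of scale form (Definition~\ref{def:scale-family}) with $D(x)=\pi^{-1}\arctan(x)+\tfrac12$, so $D(0)=\tfrac12$, and the density $d(u)=\pi^{-1}(u^2+1)^{-1}$ is continuous on $\RR$.

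Next, I would choose the threshold in Theorem~\ref{thm:scale-family} to match the statement by setting
\[
t\;=\;\frac{5\pi\, n^{1/2} s^{1/2}}{4\sum_{k=1}^n\sigma_k^{-1}}
\]
for an auxiliary parameter $s>0$, and impose the hypothesis $t\leq \tfrac12\min_{k}\sigma_k$. Under this hypothesis, for every $|u|\leq t$ and every $1\leq k\leq n$, one has $|u/\sigma_k|\leq 1/2$, whence
\[
\min_{\substack{|u|\leq t\\ 1\leq k\leq n}} d\!\left(\frac{u}{\sigma_k}\right)\;\geq\;\min_{|v|\leq 1/2} d(v)\;=\;d(1/2)\;=\;\frac{4}{5\pi},
\]
using that $d$ is symmetric and decreasing on $[0,\infty)$. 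This is the Cauchy analogue of the constant $0.35$ from the normal case.

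Finally, I would substitute into the two tail bounds of Theorem~\ref{thm:scale-family}. For even $n$, the exponent becomes
\[
\Bigl(\sum_{k=1}^n \sigma_k^{-1}\Bigr)^{2}\cdot\frac{2t^2}{n}\cdot\Bigl(\frac{4}{5\pi}\Bigr)^{2}\;=\;2s,
\]
yielding the claimed $2e^{-2s}$ bound. For odd $n$, the supplementary hypothesis of Theorem~\ref{thm:scale-family} reads $t\cdot \tfrac{4}{5\pi}\cdot \sum_{k=1}^n \sigma_k^{-1}\geq 2$, which reduces to $ns\geq 4$; under this, the same substitution gives exponent $s/2$, and hence the $2e^{-s/2}$ bound. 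Renaming $s$ back to $t$ delivers the two stated inequalities.

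I do not anticipate a genuine obstacle: the argument is essentially mechanical once the density lower bound $d(1/2)=4/(5\pi)$ is recorded, and the shape condition $t\leq \tfrac12\min_k\sigma_k$ is exactly what guarantees that the worst-case density stays bounded away from zero on the relevant interval. The only point requiring a moment's care is translating the odd-$n$ side condition of Theorem~\ref{thm:scale-family} into the clean requirement $nt\geq 4$; this is the place where the specific numerical constants of the Cauchy density combine with those of $t$ to cancel exactly.
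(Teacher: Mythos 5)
Your proposal is correct and follows exactly the route the paper takes: the paper's own justification is simply to invoke the template of the proof of Corollary~\ref{cor:normal} (i.e., Theorem~\ref{thm:scale-family} with the scale-family representation $D(x)=\pi^{-1}\arctan(x)+\tfrac12$ and the density lower bound $\min_{|v|\le 1/2}d(v)=\tfrac{4}{5\pi}$), and your substitution of $t=\tfrac{5\pi n^{1/2}s^{1/2}}{4\sum_k\sigma_k^{-1}}$ together with the reduction of the odd-$n$ side condition to $ns\ge 4$ reproduces the stated constants exactly. No gaps.
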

\begin{remark}\label{rmk:cauchy}
One possible misconception of the concentration inequalities in Theorem~\ref{thm:general} is that all the percentiles have sub-gaussian tails. This is actually not true. The tails depend on the function $\varphi_F^+(t)$. For instance, let us consider the $1$st order statistics ($p=\frac{1}{n}$) of i.i.d. Cauchy random variables with $\sigma_k\equiv 1$.  In this case, we have $F_N(x)=\frac{1}{2}+\frac{1}{\pi}\cdot {\rm arctan}(x)$ and $F_N'(x)=\frac{1}{\pi(1+x^2)}$ which is a convex function for $x\geq 1$.
As a result, for any $t>0$ and $n\geq 6$, we get 
\begin{align*}
\varphi_F^+(t)=&F_N\Big(\tan\big[\pi\big(\frac{1}{2}-\frac{1}{n}\big)\big]+t\Big)-F_N\Big(\tan\big[\pi\big(\frac{1}{2}-\frac{1}{n}\big)\big]\Big)\\
\leq& \frac{1}{\pi}\cdot \frac{t}{1+\big[t/2+\tan\big(\pi(0.5-n^{-1})\big)\big]^2}\\
\approx& \frac{1}{\pi}\cdot \frac{t}{1+\big[t/2+\cos^{-1}\big(\pi(0.5-n^{-1})\big)\big]^2}\approx \frac{1}{\pi}\cdot \frac{t}{1+(t/2+\pi^{-1}n)^2}.
\end{align*}
Therefore, for large $t>n$, the tail of the $1$st order statistics by Theorem~\ref{thm:general} is $e^{-O(n/t^2)}$.  This tail bound is loose since it is famous that the $1$st order statistics of i.i.d. Cauchy random variables follows the inverse exponential distribution for large $n$. 
\end{remark}

\subsection{Median of independent Laplace random variables}
Let $\{X_k\}_{k=1}^n$ be independent Laplace random variables and for each $X_k$, its probability density function is given as
\begin{equation}\label{eq:laplace_fk}
f_k(x)=\frac{1}{2\sigma_k}\exp\bigg\{-\frac{|x|}{\sigma_k} \bigg\}\quad \forall\ x\in \RR
\end{equation}
for $\sigma_k>0$. The corresponding cumulative distribution function is written as
$$
F_k(x)=
\begin{cases}
\frac{1}{2}\exp\Big\{\frac{x}{\sigma_k}\Big\}\quad& \textrm{if } x\leq 0\\
1-\frac{1}{2}\exp\Big\{-\frac{x}{\sigma_k}\Big\}\quad& \textrm{if } x\geq 0
\end{cases}
$$
implying that $F_k(0)=\frac{1}{2}$.  Since $\{F_k(x)\}_{k=1}^n$ belong to the scale family as in Definition~\ref{def:scale-family} with function 
$$
D(x)=\begin{cases}
\frac{1}{2}\exp\{x\}\quad& \textrm{if } x\leq 0\\
1-\frac{1}{2}\exp\{-x\}\quad& \textrm{if } x\geq 0
\end{cases}
$$
we obtain its density function $d(u)=\frac{1}{2}\exp\{-|x|\}$ concluding that $\min_{|u|\leq 1/2}d(u)=\frac{1}{2\sqrt{e}}$. By the proof of Corollary~\ref{cor:normal}, we immediately obtain the following concentration bound for the median of Laplace random variables. 
\begin{corollary}\label{cor:cauchy}
Let $\{X_k\}_{k=1}^n$ be independent Laplace random variables with corresponding probability density functions as in eq. (\ref{eq:laplace_fk}). Let $t>0$ such that
$$
\frac{2\sqrt{e} n^{1/2}t^{1/2}}{\sum_{k=1}^n\sigma_k^{-1}}\leq \frac{1}{2}\min_{1\leq k\leq n}\sigma_k.
$$
If $n$ is an even number, then we obtain
$$
\PP\bigg(|M|\geq \frac{2\sqrt{e} n^{1/2}t^{1/2}}{\sum_{k=1}^n\sigma_k^{-1}}\bigg)\leq 2\exp\{-2t\}
$$
where $M={\rm Med}\big(X_1,\cdots,X_n\big)$. On the other hand, if $n$ is an odd number and $nt\geq 4$, then we get
$$
\PP\bigg(|M|\geq \frac{2\sqrt{e} n^{1/2}t^{1/2}}{\sum_{k=1}^n\sigma_k^{-1}}\bigg)\leq 2\exp\{-t/2\}.
$$
\end{corollary}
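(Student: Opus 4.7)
The plan is to apply Theorem~\ref{thm:scale-family} directly, in exactly the same style as the proof of Corollary~\ref{cor:normal} for the Gaussian case. The Laplace family is already in the scale form of Definition~\ref{def:scale-family} with the generating c.d.f.\ $D(\cdot)$ and density $d(u)=\tfrac12 e^{-|u|}$ explicitly identified just before the corollary statement, and $D(0)=\tfrac12$, so the hypotheses of Theorem~\ref{thm:scale-family} are met.

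First I would parametrize the perturbation radius. Write the target deviation as
\[
t_\star \;=\; \frac{2\sqrt{e}\, n^{1/2} s^{1/2}}{\sum_{k=1}^n \sigma_k^{-1}},
\]
and impose the scale condition $t_\star \le \tfrac12 \min_k \sigma_k$ (this is exactly the hypothesis of the corollary once we relabel $s$ back to $t$). Under this condition, $|u|\le t_\star$ and $1\le k\le n$ imply $|u|/\sigma_k \le 1/2$, so since $d(\cdot)$ is symmetric and strictly decreasing in $|u|$,
\[
\min_{\substack{|u|\le t_\star \\ 1\le k\le n}} d\!\left(\frac{u}{\sigma_k}\right) \;=\; d(1/2) \;=\; \frac{1}{2\sqrt{e}}.
\]

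Next I would plug these into the two bounds furnished by Theorem~\ref{thm:scale-family}. For even $n$, the exponent becomes
\[
\Big(\sum_k \sigma_k^{-1}\Big)^2 \cdot \frac{2 t_\star^2}{n} \cdot \frac{1}{4e} \;=\; 2s,
\]
giving $\PP(|M|\ge t_\star)\le 2e^{-2s}$. For odd $n$, the factor $2/n$ is replaced by $1/(2n)$ (with the supplementary condition $t_\star \cdot \tfrac{1}{2\sqrt{e}}\sum_k\sigma_k^{-1}\ge 2$, which simplifies to $ns\ge 4$ after substituting the form of $t_\star$); the same algebra yields exponent $s/2$ and hence $\PP(|M|\ge t_\star)\le 2e^{-s/2}$. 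Renaming $s$ back to $t$ recovers the two stated inequalities.

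I do not expect any substantive obstacle: the proof is mechanical once the minimum of $d$ on $[-1/2,1/2]$ is located and the scale-matching $t_\star \le \tfrac12 \min_k \sigma_k$ is enforced. The only point worth stressing is the verification, in the odd-$n$ case, that the side condition $t\,\min_{|u|\le t, k}d(u/\sigma_k)\sum_k\sigma_k^{-1}\ge 2$ of Theorem~\ref{thm:scale-family} is implied by $nt\ge 4$ after substitution; this reduces to an elementary comparison and introduces no loss beyond the constants already absorbed into the statement.
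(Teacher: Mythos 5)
Your proposal is correct and follows essentially the same route as the paper, which proves this corollary by invoking Theorem~\ref{thm:scale-family} exactly as in the proof of Corollary~\ref{cor:normal}, with $\min_{|u|\le t_\star,\,k} d(u/\sigma_k)\ge d(1/2)=\tfrac{1}{2\sqrt{e}}$ under the scale condition and the same substitution yielding exponents $2t$ and $t/2$. The verification that the odd-$n$ side condition reduces to $\sqrt{nt}\ge 2$, i.e.\ $nt\ge 4$, matches the paper's argument as well.
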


\section{Other percentiles of independent normal random variables.}\label{sec:normal_general}
In Section~\ref{sec:normal}, we developed the concentration bounds for the median of independent centered normal random variables. Occasionally, other percentiles than median are of interest. Still, let $X_k\sim\calN(0,\sigma_k^2)$ be independent and we are interested in its percentiles $X^{(\floor{(0.5+\tau)n})}$ and $X^{(\floor{(0.5-\tau)n})}$ for some small number $\tau\leq \frac{1}{4}$.  Recall, by borrowing the notations from Theorem~\ref{thm:general}, that
$$
F_N(x)=\frac{1}{n}\sum_{k=1}^n \Phi\Big(\frac{x}{\sigma_k}\Big)
$$
where $\Phi(\cdot)$ represents the cumulative distribution function of standard normal random variables.

\begin{theorem}\label{cor:normal_general}
Let $\tau\in[0, 0.25]$. 
For any $t\geq n^{-1}$ such that 
\begin{equation}\label{eq:normal_general_cond1}
\frac{\sqrt{8tn\pi}e^{1/4}}{\sum_{k=1}^n\sigma_k^{-1}\exp\big\{-[F_N^{-1}(0.5+\tau)]^2/\sigma_k^2\big\}}\leq \min_{1\leq k\leq n} \sigma_k/2
\end{equation}
, we have
$$
\PP\Big(X^{(\floor{(0.5-\tau)n})}\geq F_N^{-1}(0.5+\tau)+\frac{\sqrt{8tn\pi}e^{1/4}}{\sum_{k=1}^n\sigma_k^{-1}\exp\big\{-[F_N^{-1}(0.5+\tau)]^2/\sigma_k^2\big\}}\Big)\leq 2e^{-2t}.
$$
Similarly, For any $t\geq 0$ such that 
$$
\frac{\sqrt{8tn\pi}e^{1/4}}{\sum_{k=1}^n\sigma_k^{-1}\exp\big\{-[F_N^{-1}(0.5-\tau)]^2/\sigma_k^2\big\}}\leq \min_k \sigma_k/2
$$
, we have
$$
\PP\Big(X^{(\floor{(0.5+\tau)n})}\leq F_N^{-1}(0.5-\tau)-\frac{\sqrt{8tn\pi}e^{1/4}}{\sum_{k=1}^n\sigma_k^{-1}\exp\big\{-[F_N^{-1}(0.5-\tau)]^2/\sigma_k^2\big\}}\Big)\leq 2e^{-2t}.
$$
\end{theorem}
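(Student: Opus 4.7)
The strategy is to apply Corollary~\ref{cor:general} twice to the independent normal random variables $X_k \sim \calN(0,\sigma_k^2)$: once with $p = 0.5-\tau$ to obtain the upper-tail bound on $X^{(\lfloor(0.5-\tau)n\rfloor)}$, and once with $p = 0.5+\tau$ to obtain the lower-tail bound on $X^{(\lfloor(0.5+\tau)n\rfloor)}$. In both cases $\tau_{N,p} = F_N^{-1}(1-p)$ equals $\pm F_N^{-1}(0.5+\tau)$ by the symmetry $\Phi(-x) = 1 - \Phi(x)$ (hence $F_N(-x) = 1 - F_N(x)$). I will let $t^{\ast}$ denote the displacement appearing in the statement, and reduce everything to a lower bound on $\sum_{k=1}^n s_{N,k}(t^{\ast},p)$ in Corollary~\ref{cor:general}.

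For the upper-tail claim, $\tau_{N,p} = F_N^{-1}(0.5+\tau) \geq 0$ since $\tau \geq 0$. Because $f_k(u) = \frac{1}{\sqrt{2\pi}\sigma_k} e^{-u^2/(2\sigma_k^2)}$ is unimodal and symmetric about zero, its minimum over $[\tau_{N,p} - t^{\ast}, \tau_{N,p}+t^{\ast}]$ is attained at the right endpoint, so $s_{N,k}(t^{\ast},p) = \frac{1}{\sqrt{2\pi}\sigma_k} \exp(-(\tau_{N,p}+t^{\ast})^2/(2\sigma_k^2))$. Using the crude inequality $(\tau_{N,p}+t^{\ast})^2 \leq 2\tau_{N,p}^2 + 2(t^{\ast})^2$ together with the hypothesis $t^{\ast} \leq \min_k \sigma_k/2$ (which gives $(t^{\ast})^2/\sigma_k^2 \leq 1/4$), I would conclude
\begin{equation*}
\sum_{k=1}^n s_{N,k}(t^{\ast},p) \ \geq\ \frac{e^{-1/4}}{\sqrt{2\pi}}\, S, \qquad S := \sum_{k=1}^n \sigma_k^{-1} \exp\Bigl(-\frac{[F_N^{-1}(0.5+\tau)]^2}{\sigma_k^2}\Bigr).
\end{equation*}
A direct substitution shows $t^{\ast} \sum_k s_{N,k}(t^{\ast},p) \geq 2\sqrt{tn}$, which is at least $2$ under the assumption $t \geq n^{-1}$, so the hypothesis $t \sum s_{N,k} \geq 2 I_p$ of Corollary~\ref{cor:general} is satisfied. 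Substituting into the upper-tail inequality of the Corollary, and using $(I_p+1)^2 \leq 4$, the exponent $\bigl(\sum s_{N,k}\bigr)^2 \cdot 2(t^{\ast})^2/[n(I_p+1)^2]$ evaluates to exactly $2t$ by the chosen form of $t^{\ast}$, proving the first inequality.

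For the second inequality, take $p = 0.5+\tau$, so $\tau_{N,p} = F_N^{-1}(0.5-\tau) = -F_N^{-1}(0.5+\tau) \leq 0$. Then $\max|u|$ over $[\tau_{N,p} - t^{\ast}, \tau_{N,p}+t^{\ast}]$ equals $|F_N^{-1}(0.5-\tau)| + t^{\ast}$, and the same two-step bound (split the square, then use $t^{\ast} \leq \min_k \sigma_k/2$) produces the analogous lower bound on $\sum s_{N,k}$ with $[F_N^{-1}(0.5-\tau)]^2$ in the exponent. The lower-tail bound in Corollary~\ref{cor:general} has prefactor $2t^2/n$ rather than $2t^2/[n(I_p+1)^2]$, so no $I_p$-hypothesis is required, which explains why the statement does not impose $t \geq n^{-1}$ for this case. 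Plugging in the prescribed $t^{\ast}$ yields an exponent of at least $2t$ (in fact $8t$), so the stated $2e^{-2t}$ is comfortably met.

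The only delicate step is the use of $(a+b)^2 \leq 2a^2 + 2b^2$: this is what peels off the factor $e^{-1/4}$ and produces the exponent $-[F_N^{-1}(0.5\pm\tau)]^2/\sigma_k^2$ rather than the sharper $-(\cdot)^2/(2\sigma_k^2)$ one would get from a direct evaluation. Everything else is routine substitution of the Gaussian density into Corollary~\ref{cor:general} and careful tracking of the constants $1/\sqrt{2\pi}$, $e^{1/4}$, and $(I_p+1)^2 \leq 4$.
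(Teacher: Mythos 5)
Your proposal is correct and follows essentially the same route as the paper: a lower bound on the Gaussian increment via the density at the far endpoint of the interval, the split $(a+b)^2\le 2a^2+2b^2$ combined with $t^\ast\le\min_k\sigma_k/2$ to peel off the factor $e^{-1/4}$, absorption of the $-1/n$ term under $t\ge n^{-1}$, and the Hoeffding-based tail bound. The only cosmetic difference is that you invoke Corollary~\ref{cor:general} (which packages these steps), whereas the paper redoes the bound on $\varphi_F^{+}$ inline and applies Theorem~\ref{thm:general} directly; the constants and conditions come out the same either way.
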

\begin{proof}[Proof of Theorem~\ref{cor:normal_general}]
We only prove the lower bound of $X^{(\floor{(0.5-\tau)n})}$ since the proof of upper bound for $X^{(\floor{(0.5+\tau)n})}$ is similar. 
 Since $\Phi(\cdot)$ is Lipschitz on $\RR$ and for any $x,y\geq 0$,
 $$
\frac{y}{\sqrt{2\pi}}\cdot e^{-(x+y)^2/2}\leq \Phi(x+y)-\Phi(x)=\frac{1}{\sqrt{2\pi}}\int_x^{x+y}e^{-t^2/2}dt\leq \frac{y}{\sqrt{2\pi}}\cdot e^{-x^2/2}. 
 $$
 Therefore, for $0<t\leq \frac{1}{2}\min_{1\leq k\leq n} \sigma_k$, we have
\begin{align*}
\varphi_F^+(t)=&\frac{\floor{(0.5-\tau)n}}{n}+F_N\big(F_N^{-1}(0.5+\tau)+t\big)-1\\
\geq& -\frac{1}{n}+\frac{1}{n}\sum_{k=1}^n \bigg(F_k\Big(F_N^{-1}(0.5+\tau)+t\Big)-F_k\Big(F_N^{-1}(0.5+\tau)\Big)\bigg)\\
\geq&\frac{1}{n}\sum_{k=1}^n \frac{t}{\sqrt{2\pi}\sigma_k}\exp\Big\{-\frac{\big[F_N^{-1}(0.5+\tau)\big]^2}{\sigma_k^2}-\frac{t^2}{\sigma_k^2}\Big\}-\frac{1}{n}\\
\geq&\frac{e^{-1/4}}{n}\sum_{k=1}^n \frac{t}{\sqrt{2\pi}\sigma_k}\exp\Big\{-\frac{\big[F_N^{-1}(0.5+\tau)\big]^2}{\sigma_k^2}\Big\}-\frac{1}{n}\\
\geq& \frac{e^{-1/4}}{2n}\sum_{k=1}^n \frac{t}{\sqrt{2\pi}\sigma_k}\exp\Big\{-\frac{\big[F_N^{-1}(0.5+\tau)\big]^2}{\sigma_k^2}\Big\}
\end{align*}
where the last inequality holds whenever
$$
e^{-1/4}\sum_{k=1}^n \frac{t}{\sqrt{2\pi}\sigma_k}\exp\Big\{-\frac{\big[F_N^{-1}(0.5+\tau)\big]^2}{\sigma_k^2}\Big\}\geq 2.
$$
By Theorem~\ref{thm:general}, we conclude that 
\begin{align*}
\PP\Big(X^{(\floor{(0.5-\tau)n})}&\geq F_N^{-1}(0.5+\tau)+t \Big)\\
\leq& 2\exp\bigg\{-\frac{e^{-1/2}t^2}{4n\pi}\Big(\sum_{k=1}^n\sigma_k^{-1}\exp\big\{-\big[F_N^{-1}(0.5+\tau)\big]^2/\sigma_k^2\big\}\Big)^2\bigg\}.
\end{align*}
By setting $t=\frac{\sqrt{8un\pi}e^{1/4}}{\sum_{k=1}^n\sigma_k^{-1}\exp\big\{-[F_N^{-1}(0.5+\tau)]^2/\sigma_k^2\big\}}$ for any $u\geq 1$ such that $t\leq \min_{1\leq k\leq n} \sigma_k/2$, we conclude with 
$$
\PP\Big(X^{(\floor{(0.5-\tau)n})}\geq F_N^{-1}(0.5+\tau)+\frac{\sqrt{8un\pi}e^{1/4}}{\sum_{k=1}^n\sigma_k^{-1}\exp\big\{-[F_N^{-1}(0.5+\tau)]^2/\sigma_k^2\big\}}\Big)\leq 2e^{-2u}.
$$
\end{proof}

\section{Discussion}
In this note, we develop the concentration bounds for the sample percentiles of independent but non-identical random variables. For a wide class of symmetric distributions, we show that the sample median has a magnitude related with the harmonic mean of the associated standard deviations or scale parameters.  There are several important unsolved questions for further investigation. The first one is on the lower bound of the expectation of the median's magnitude. Only upper bound is proved theoretically in this note. It is, however, unclear whether the expected magnitude of the median is indeed of the order related with the harmonic mean, not even for the normal random variables. 
Another interesting question is its generalization to the tail bound for the order statistics of dependent zero-mean random variables. In \cite[Theorem~3]{latala2010order}, the authors proved a sub-exponential tail bound for the order statistics of log-concave random variables (could be dependent) with unit variance and zero mean. We are wondering whether the median of dependent centered normal random variables is also characterized by the  harmonic mean of the individual standard deviations. 

\bibliographystyle{abbrv}
\bibliography{refer}
\end{document}